\RequirePackage[l2tabu, orthodox]{nag}
\RequirePackage{fixltx2e}
\documentclass[10pt,a4paper,reqno,oneside,final]{smfart}

\addtolength\topmargin{-.70in} \addtolength\textheight{1.1in}
\addtolength\oddsidemargin{-.12\textwidth}
\addtolength\evensidemargin{-.12\textwidth}
\addtolength\textwidth{.26\textwidth}

\tolerance=1414
\setlength\emergencystretch{1.5em}
\hbadness=1414
\setlength\hfuzz{.3pt}
\widowpenalty=10000
\raggedbottom
\setlength\vfuzz{.3pt}

\usepackage{leftidx}
\usepackage{appendix}
\usepackage{tikz}\usetikzlibrary{decorations.markings,matrix,arrows}
\usepackage{amsfonts}
\usepackage{amsthm}
\usepackage[T1]{fontenc}
\usepackage[mathscr]{eucal}
\usepackage{setspace}
\usepackage{amssymb,latexsym,amsmath,amscd}
\usepackage{graphicx}
\usepackage{showkeys}
\usepackage[french]{babel}
\usepackage{layout}
\usepackage{enumerate}
\usepackage{indentfirst}
\usepackage[varg]{pxfonts}
\usepackage[stretch=10]{microtype}
\usepackage{booktabs}
\usepackage{xspace}
\usepackage{eufrak}
\usepackage[colorlinks=false, pdfborder={0 0 0}]{hyperref} 
\usepackage{nameref}
\usepackage{cleveref}
\usepackage{calrsfs}
\usepackage{filecontents}
\usepackage{mathtools}
\usepackage{titlesec}
\usepackage{fixltx2e}
\usepackage{todonotes}

\setcounter{secnumdepth}{4}

\titleformat{\paragraph}
{\normalfont\normalsize\bfseries}{\theparagraph}{1em}{}
\titlespacing*{\paragraph}
{0pt}{3.25ex plus 1ex minus .2ex}{1.5ex plus .2ex}

\newtheoremstyle{exostyle} 
{\topsep}
{\topsep}
{}
{}
{\bfseries}
{.}
{ }
{\thmname{#1}\thmnumber{ #2}\thmnote{. \normalfont{\textit{#3}}}}

\theoremstyle{exostyle} 
 


\setstretch{1.2}

\makeatletter
\newcommand{\neutralize}[1]{\expandafter\let\csname c@#1\endcsname\count@}
\makeatother

\theoremstyle{plain}
\newtheorem{thm}{Theorem}[section]

\newtheorem*{thm*}{Theorem}

\newtheorem{lem}[thm]{Lemma}

\newtheorem{pro-def}[thm]{Proposition-Definition}
\newtheorem{cor}[thm]{Corollary}
\newtheorem{conj}[thm]{Conjecture}

\theoremstyle{definition}

\newtheorem{rem}[thm]{Remark}

\theoremstyle{remark}




\newcommand{\ssec}{\subsection}

\newcommand{\wt}{\widetilde}

\newcommand{\bP}{\mathbf{P}}

\newcommand{\bC}{\mathbf{C}}

\newcommand{\bR}{\mathbf{R}}

\newcommand{\bZ}{\mathbf{Z}}

\newcommand{\gO}{\Omega}

\newcommand{\gS}{\Sigma}

\newcommand{\cD}{\mathcal{D}}
\newcommand{\cC}{\mathcal{C}}

\newcommand{\cJ}{\mathcal{J}}

\newcommand{\cP}{\mathcal{P}}

\newcommand{\cO}{\mathcal{O}}

\newcommand{\fS}{\mathfrak{S}}

\newcommand{\ol}{\overline}

\newcommand{\colonec}{\mathrel{:=}}

\newcommand{\alb}{\mathrm{alb}}

\newcommand{\Id}{\mathrm{Id}}

\newcommand{\CH}{\mathrm{CH}}

\newcommand{\Prym}{\mathrm{Prym}}

\newcommand{\Pic}{\mathrm{Pic}}

\newcommand{\Ker}{\mathrm{Ker}}

\newcommand{\Ima}{\mathrm{Im}}

\renewcommand{\(}{\left(}
\renewcommand{\)}{\right)}

\newcommand{\dto}{\dashrightarrow}

\newcommand{\vast}{\bBigg@{4}}
\newcommand{\Vast}{\bBigg@{5}}

\makeatletter
\let\orgdescriptionlabel\descriptionlabel
\renewcommand*{\descriptionlabel}[1]{%
  \let\orglabel\label
  \let\label\@gobble
  \phantomsection
  \edef\@currentlabel{#1}%
  \let\label\orglabel
  \orgdescriptionlabel{#1}%
}
\makeatother
\tikzset{node distance=2cm, auto}

\numberwithin{equation}{section}

\title{Rational maps from punctual Hilbert schemes of K3 surfaces} 

\author{Hsueh-Yung Lin}
\address{Centre de Mathématiques Laurent Schwartz, 91128 Palaiseau Cédex, France}


\begin{document}

\maketitle

\begin{abstract}
The purpose of this short note is to study dominant rational maps from  punctual Hilbert schemes of length $k\ge2$ of  projective $K3$ surfaces. Precisely, we  prove that their image is necessarily rationally connected if the rational map is not generically finite. As an application, we simplify C. Voisin's proof of the fact that symplectic involutions of any projective $K3$ surface $S$ act trivially on $\CH_0(S)$.
\end{abstract}

\section{Introduction}



In this note, we will work throughout over the field of complex numbers $\bC$. Recall that a $K3$ surface $S$ is by definition a smooth projective surface with trivial canonical bundle $K_S = \gO_S^2$ and vanishing $H^1(S, \cO_S)$. The Hilbert scheme of zero-dimensional subschemes of length $k \ge 2$ on the $K3$ surface $S$ will be denoted by $S^{[k]}$.  

Recall that a proper variety $X$ is said to be \emph{uniruled} (resp. \emph{rationally connected}) if a general point $x \in X$ (resp. two general points $x,y \in X$) is contained in the image of a non-constant map $\bP^1 \to X$. These are obviously birationally invariant properties. It is also clear that $\CH_0(X) = \bZ$ for any rationally connected variety $X$. When $X$ is smooth, rational connectedness is equivalent to the \emph{a priori} weaker condition that  two general points can be joined by a chain of rational curves~\cite[Theorem $2.1$]{KMM}.

The following is the main result we obtain in this article:

\begin{thm}\label{RCbase}
If $f: S^{[k]} \dto B$ is a dominant rational map to a variety $B$ with $\dim B < \dim S^{[k]}$, then either $B$ is a point or rationally connected.
\end{thm}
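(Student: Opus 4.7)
The plan is to compose $f$ with the maximal rationally connected (MRC) fibration $\pi \colon B \dto B'$ of $B$, obtaining $g := \pi \circ f \colon S^{[k]} \dto B'$, where $B'$ is not uniruled by construction. Since $B$ is rationally connected if and only if $B'$ is a point (and $B$ itself is a point iff $B'$ is), the theorem reduces to showing that $g$ is constant whenever $\dim B \ge 1$. I would exploit the abundance of rational curves in $S^{[k]}$ coming from the infinitely many rational curves in $S$.

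For each rational curve $C \subset S$, let $D_C \subset S^{[k]}$ denote the divisor whose general point is a length-$k$ subscheme containing a point of $C$; it is swept out by the rational curves $R_Z^C := \{Z + p : p \in C\}$ as $Z$ varies in $S^{[k-1]}$. The $D_C$ are pairwise distinct irreducible divisors, so $\bigcup_C D_C$ is Zariski dense in $S^{[k]}$ (only finitely many irreducible divisors can lie in a proper closed subset). The crux of the argument is the assertion: \emph{for all but finitely many rational curves $C$, the map $g$ contracts $R_Z^C$ for generic $Z$}, equivalently $g|_{D_C}$ factors through the projection $D_C \to S^{[k-1]}$, $Z + p \mapsto Z$. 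If this failed for infinitely many $C$'s, then for each such $C$ the image $g(D_C) \subset B'$ would be covered by the family of rational curves $\{g(R_Z^C)\}_Z$ and hence uniruled; the union $\bigcup_C g(D_C)$ being Zariski dense in $B'$ (as $g$ is dominant and $\bigcup_C D_C$ is Zariski dense in $S^{[k]}$), the locus of points of $B'$ lying on a rational curve of $B'$ would be Zariski dense. Since the Hilbert scheme parameterizing rational curves in $B'$ has only countably many irreducible components, one of them would then dominate $B'$, making $B'$ itself uniruled --- contradicting the MRC construction. This dichotomy is the main obstacle.

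Granted this assertion, fix a generic $Z \in S^{[k-1]}$ and form the rational map $\phi_Z \colon S \dto B'$, $p \mapsto g(Z + p)$. By the assertion, $\phi_Z$ contracts infinitely many rational curves of $S$. I would then verify that no nonconstant rational map from a $K3$ surface can contract infinitely many rational curves. Indeed, if the image has dimension one, that image must be $\bP^1$ (by simple connectedness of $S$ and its blow-ups, any morphism to a curve of genus $\ge 1$ factors through a non-compact universal cover and is constant), so the fibers are linearly equivalent and each contracted rational curve is a component of a fiber --- but only finitely many rational curves on a $K3$ have $H$-intersection bounded by a fixed constant, as there are only finitely many Picard classes of bounded $H$-degree, and each linear system $|\alpha|$ contains only finitely many rational curves by the rigidity of nodal rational curves. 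If the image has dimension two, $\phi_Z$ is generically finite and thus contracts only finitely many curves. Either way we obtain a contradiction, so $\phi_Z$ is constant: $g(Z + p) = \phi(Z)$ depends only on $Z$, defining a rational map $\phi \colon S^{[k-1]} \dto B'$.

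Finally, the symmetry of $\xi = Z + p \in S^{[k]}$ forces $\phi$ to be constant: expressing the same generic $\xi$ as $Z' + p'$ for a different choice of $p' \in \xi$ gives $\phi(Z) = \phi(Z')$, so $\phi$ is invariant under single-point swaps in a $(k-1)$-subset. Since any two generic elements of $S^{[k-1]}$ are linked by a finite chain of such swaps, $\phi$ is constant. Therefore $g$ is constant, $B'$ is a point, and $B$ is a point or rationally connected, as claimed.
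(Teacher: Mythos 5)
Your overall architecture is reasonable: the reduction to the MRC base $B'$, the argument that a non-constant rational map from a $K3$ surface cannot contract infinitely many rational curves, and the final swap argument forcing $\phi$ to be constant are all sound and run parallel to what the paper does. The genuine gap is in the justification of what you yourself call the crux, at the sentence ``the locus of points of $B'$ lying on a rational curve of $B'$ would be Zariski dense. Since the Hilbert scheme parameterizing rational curves in $B'$ has only countably many irreducible components, one of them would then dominate $B'$.'' This inference is false: Zariski density of a countable union of proper closed subsets does not imply that any single one of them is dense. Uniruledness is equivalent to the existence of a rational curve through a \emph{very general} point, i.e.\ to the locus swept out by rational curves not being contained in a countable union of proper closed subvarieties; mere Zariski density is strictly weaker. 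A $K3$ surface with infinitely many rational curves is itself a counterexample to the inference: the union of its rational curves is dense, the space of rational curves has countably many components, none of them dominates, and the surface is not uniruled. Concretely, nothing in your argument rules out that for every ``bad'' $C$ the closure of $g(D_C)$ is a \emph{proper} uniruled subvariety of $B'$; countably many such subvarieties with dense union are perfectly compatible with $B'$ being non-uniruled, so no contradiction is reached and the assertion remains unproved.

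To close the gap you would need the restriction $g|_{D_C}$ to be \emph{dominant} for the relevant $C$: then the single irreducible family $\bigl\{g(R_Z^C)\bigr\}_Z$ covers a dense subset of $B'$ and genuinely contradicts non-uniruledness. This is exactly where the paper invests its work. It takes the Bogomolov--Mumford rational curve $D$ lying in an \emph{ample} linear system, passes to $S^k$ where $\bigcup_i S^{i-1}\times D\times S^{k-i}$ is ample, and proves in Lemma~\ref{dom} --- using bigness of the pullback, the Boucksom--Demailly--P\u{a}un--Peternell intersection criterion with movable curves, and non-uniruledness of $B$ to control the exceptional divisors of a resolution --- that a dominant rational map to a non-uniruled base restricts dominantly to an ample divisor. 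No such dominance is available for your divisors $D_C$ attached to arbitrary rational curves $C$ (these need not be ample, and the class of $D_C$ on $S^{[k]}$ is trivial on the fibres of the Hilbert--Chow morphism in any case, which is why the paper works on $S^k$). Note finally that the paper only needs \emph{one} ample contracted rational curve in each slice $S\times z$ to force the fibres of $f|_{S\times z}$ to be positive-dimensional; the hypothesis of infinitely many rational curves is then used at a different point, namely to exclude the possibility that $S\times z$ maps onto a non-rational curve.
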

One can compare this theorem with a result of Matsushita~\cite{MatsushitaFib}, showing that for any surjective morphism from an irreducible symplectic manifold to a projective base $B$ of positive dimension, if the general fiber is of positive dimension, then $B$ is Fano. As an application of Theorem~\ref{RCbase}, we will give an alternative proof of C. Voisin's main result in~\cite{Voisinsyminv}.



\begin{thm}[Voisin]\label{syminv}
Suppose $S$ is a projective $K3$ surface and $\imath$ is a symplectic involution acting on $S$, then $\imath$ acts as the identity on $\CH_0(S)$.
\end{thm}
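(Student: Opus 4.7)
\smallskip
\noindent\textbf{Proof plan.} The plan is to deduce Theorem~\ref{syminv} from Theorem~\ref{RCbase} by reducing to a concrete rational equivalence on $S^{[2]}$ and then realizing it through an $\imath^{[2]}$-invariant fibration whose base is forced to be rationally connected.

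\emph{Reduction.} First I would reduce to showing that, for a general $x\in S$ and any fixed point $p$ of $\imath$,
\[
[\{x,p\}] = [\{\imath(x),p\}] \quad\text{in } \CH_0(S^{[2]}).
\]
Indeed, letting $I=\{(\xi,q)\in S^{[2]}\times S : q\in\xi\}$ be the universal subscheme with projections $p_1,p_2$, the composition $(p_2)_*\,p_1^{*}$ sends $[\xi]$ to $\sum_{q\in\xi}[q]\in\CH_0(S)$, so the displayed equality pushes down to $[x]+[p]=[\imath(x)]+[p]$, hence $[x]=[\imath(x)]$ for general $x\in S$; a standard specialization argument then extends this to all of $S$.

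\emph{Invariant fibration.} Next, I would construct an $\imath^{[2]}$-invariant dominant rational map $f:S^{[2]}\dto B$ with $\dim B<4$, to which Theorem~\ref{RCbase} applies to force $B$ to be rationally connected. A natural candidate uses the Nikulin $K3$ surface $\Sigma$, the minimal resolution of $S/\imath$: picking a pencil $\psi:\Sigma\dto\bP^1$ (for instance an elliptic pencil, available on any projective $K3$) and pulling it back through the $\imath$-invariant degree-$2$ rational quotient $\pi:S\dto\Sigma$ yields an $\imath$-invariant $\phi=\psi\circ\pi:S\dto\bP^1$, whose Hilbert-scheme lift
\[
f=\phi^{[2]}:S^{[2]}\dto (\bP^1)^{[2]}=\bP^2
\]
is dominant and $\imath^{[2]}$-invariant with image of dimension $2<4$. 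In particular, $\{x,p\}$ and $\{\imath(x),p\}$ lie in a common fiber of $f$.

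\emph{Rational chain and main obstacle.} The concluding step is to use the rational connectedness of $B$ (provided by Theorem~\ref{RCbase}), together with the hypothesis that $S$ carries infinitely many rational curves, to produce an explicit chain of rational curves in $S^{[2]}$ linking $\{x,p\}$ and $\{\imath(x),p\}$. Each rational curve $R\subset S$ through $x$ yields a rational curve $\{R,p\}\subset S^{[2]}$, and these ``vertical'' curves, together with rational curves on $S^{[2]}$ coming from the RC structure of $B$ via $f$, should serve as the building blocks. The main obstacle---and the delicate part---is precisely this geometric assembly: fibers of $f$ are typically non-rational surfaces (essentially products of double covers of elliptic curves modulo the $\fS_2$-action) carrying a non-trivial $\imath$-action, so fiberwise rational curves are scarce, and the chain must combine motion along rational curves on $S$ with motion along rational chains in the base $B$ in a controlled way. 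Theorem~\ref{RCbase} is the key new input that makes this assembly feasible, replacing the delicate Hodge-theoretic analysis of Voisin's original argument.
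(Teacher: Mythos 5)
There is a genuine gap, and it sits exactly where you flag the ``main obstacle'': the final step is not an assembly problem that Theorem~\ref{RCbase} makes feasible, it is the entire content of the proof, and your setup cannot deliver it. Your fibration $f=\phi^{[2]}:S^{[2]}\dto\bP^2$ has base $\bP^2$, which is already rationally connected, so Theorem~\ref{RCbase} tells you nothing new; more importantly, rational connectedness of the \emph{base} of a fibration gives no rational curves in the \emph{total space} and no rational equivalences between points of $S^{[2]}$ lying in a common fiber. Indeed $S^{[2]}$ is a hyperk\"ahler fourfold with infinite-dimensional $\CH_0$, the general fiber of $\phi^{[2]}$ is (up to the $\fS_2$-quotient) a product of genus-one curves containing no rational curves at all, and two points of such a fiber are in general \emph{not} rationally equivalent in $S^{[2]}$ --- so no chain of rational curves joining $\{x,p\}$ to $\{\imath(x),p\}$ exists in general position, and the statement you are trying to prove must instead be a statement about zero-cycles, not about actual rational connectivity of fibers. (Your reduction step is fine: pushing forward by the incidence correspondence and using that $\CH_0(S)$ is generated by points of any dense open set does reduce the theorem to $[x]=[\imath(x)]$ for general $x$. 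The problem is everything after that.)

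The missing idea is Voisin's factorization through the universal Prym variety, which is what the paper uses. One works with $\Gamma=\Delta_S-\Gamma_\imath$ and the induced map $\Gamma_*:S^{[g]}\to\CH_0(S)$, $Z\mapsto [Z]-\imath_*[Z]$, where $g$ is the genus of a polarization $H$ on $\Sigma=S/\imath$. For general $Z=(s_1,\dots,s_g)$ there is a unique curve $C_Z\in|H|$ through the images of the $s_i$, with \'etale double cover $\wt{C_Z}\subset S$, and the class $\sum_i([s_i]-\imath_*[s_i])\in\CH_0(S)$ depends only on the image of $Z$ under the Abel--Prym map into $\Prym_{\wt{C_Z}/C_Z}$. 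This yields a dominant rational map $\gamma_S:S^{[g]}\dto\cP_{(S,H)}(\wt{\cC}/\cC)$ with $\dim\cP=2g-1<2g=\dim S^{[g]}$ through which $\Gamma_*$ factors \emph{on the level of Chow groups} (Lemma~\ref{fact2} in the paper). It is to \emph{this} map that Theorem~\ref{RCbase} is applied: the compactified base $\ol{\cP}$ is rationally connected, hence $\CH_0(\ol{\cP})=\bZ$, hence $\Gamma_*[Z]$ is constant, hence zero by evaluating at an $\imath$-invariant $Z$. The crucial feature --- absent from your $\phi^{[2]}$ --- is that the fibration is chosen so that the cycle map one wants to kill is \emph{already known} to factor through its base; rational connectedness of the base then kills it by pure dimension reasons, with no need to exhibit any rational curves in $S^{[k]}$ itself.
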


The motivation for the statement of Theorem~\ref{syminv} comes from the following conjecture, which is a consequence of the generalized Bloch conjecture for surfaces~\cite[Partie VII]{Voisin}:

\begin{conj}\label{blochfinite}
If $S$ is a surface with $q= h^{0,1} = 0$ and $f:S \to S$ is an automorphism of finite order acting trivially on $H^0(S, \gO_S^2)$, then the induced map $f_*$ acts as the identity on $\CH_0(S)$.
\end{conj}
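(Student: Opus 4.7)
Conjecture~\ref{blochfinite} is the special case of Conjecture~\ref{bloch} obtained with $X = S$ and $\Gamma = \Delta_S - \Gamma_f$: under the hypotheses $q = 0$ and $f^* = \Id$ on $H^0(S, \gO_S^2)$, one has $\Gamma^* = 0$ on all $H^0(S, \gO_S^i)$ for $i > 0$. The plan is therefore to attack this instance of generalized Bloch using Theorem~\ref{RCbase} as the central tool, in the spirit of the argument to be given later in the paper for Theorem~\ref{syminv}.

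First I would reduce the problem. Since $q = 0$, Roitman's theorem makes $\CH_0(S)_{\hom}$ torsion-free and divisible, so it suffices to prove $f_*(z) = z$ in $\CH_0(S)$ for $z$ general. A Bloch--Srinivas decomposition of $\Gamma = \Delta_S - \Gamma_f$, combined with the cohomological vanishing of $\Gamma^*$ on $H^{*,0}$, produces a decomposition of $\Gamma$ modulo cycles supported on $S \times D \cup D \times S$ for some divisor $D \subset S$, localizing the problem away from $D$.

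Second, I would handle the central case where $S$ is a projective $K3$ surface, the prototypical example of a surface with $p_g > 0$ and $q = 0$; when $p_g = 0$, Conjecture~\ref{blochfinite} reduces to Bloch's conjecture, which is known in several cases~\cite{BKD, Voisinbarlow}. Following the strategy for Theorem~\ref{syminv}, I would construct from $\Gamma$ a rational map $\phi : S^{[k]} \dto B$, for $k$ depending on the order of $f$, whose fibres detect the rational equivalence $z \sim f_*(z)$ of $0$-cycles on $S$. By Theorem~\ref{RCbase}, $B$ is either a point or rationally connected; in either case, two general fibre-equivalent points of $S^{[k]}$ are joined by a chain of rational curves in $S^{[k]}$, which pushes down to give the required rational equivalence of $0$-cycles and forces $f_*(z) = z$. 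The hypothesis of Theorem~\ref{RCbase} on the abundance of rational curves holds on every projective $K3$ surface by results of Bogomolov--Mumford and Chen--Gounelas--Liedtke.

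The main obstacle will be the remaining classes of surfaces compatible with $q = 0$ and $p_g \ge 1$, namely properly elliptic surfaces and surfaces of general type with $p_g \ge 1$. Theorem~\ref{RCbase} relies essentially on the hyperkähler geometry of $S^{[k]}$ when $S$ is a $K3$ surface and has no direct analogue for other Kodaira dimensions. A full proof of Conjecture~\ref{blochfinite} in these cases would have to proceed surface-by-surface via explicit geometric models of $S$ and its quotient $S / \langle f \rangle$, in the spirit of~\cite{Voisinbarlow}, and would constitute a genuine extension of the present framework rather than a routine application of the theorem proved here.
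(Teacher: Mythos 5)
The statement you are asked about is labelled as a \emph{conjecture} in the paper, and the paper neither proves it nor claims to: it only establishes the very particular case of symplectic \emph{involutions} on projective $K3$ surfaces (Theorem~\ref{syminv}), and cites Huybrechts for the remaining finite-order $K3$ cases. So there is no proof in the paper to compare yours against, and your text is in any case a programme rather than a proof --- you concede as much in your last paragraph for surfaces that are not $K3$. That concession alone means the proposal does not establish the statement.

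More importantly, the central step you do claim --- that for a projective $K3$ surface and $f$ of arbitrary finite order one can build $\phi : S^{[k]} \dto B$ and invoke Theorem~\ref{RCbase} ``in the spirit of the argument for Theorem~\ref{syminv}'' --- is precisely the step the paper's closing remark shows to fail. For a symplectic automorphism of order $d$, the natural base of the factorization is the universal Prym variety $\cP_{(S,H)}(\wt{\cC}/\cC)$ of the induced cyclic covers $\wt{C} \to C$, whose fibre dimension is $(d-1)(g-1)$, so that
$$\dim \cP_{(S,H)}(\wt{\cC}/\cC) = (d-1)(g-1) + g \ \ge\ 2g = \dim S^{[g]} \qquad \text{for } d \ge 3,$$
violating the hypothesis $\dim B < \dim S^{[k]}$ of Theorem~\ref{RCbase}. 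The method therefore only reaches $d=2$, i.e.\ Theorem~\ref{syminv}; the full $K3$ case of the conjecture required Huybrechts' derived-category, order-by-order argument, not this construction. Two smaller points: the proof of Theorem~\ref{RCbase} uses the abundance of rational curves on $S$ and a positivity argument, not ``the hyperk\"ahler geometry of $S^{[k]}$''; and your reduction of the $p_g = 0$ case to Bloch's conjecture replaces one open problem by another, since Bloch's conjecture is itself unknown for general surfaces of general type with $p_g = 0$.
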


A series of examples of surfaces with $q=0$ is provided by $K3$ surfaces $S$. Such  surfaces have one-dimensional $H^0(S, \gO_S^2)$ generated by a non-degenerated holomorphic two-form $\eta$. An automorphism $f:S \to S$ such that $f^*\eta = \eta$ is called a \emph{symplectic automorphism}.

A  recent new advance  of Conjecture~\ref{blochfinite} for $K3$ surfaces was made by D. Huybrechts and M. Kemeny in~\cite{HuyKem}. They worked with invariant elliptic curves and  solved Conjecture~\ref{blochfinite} for $K3$ surfaces with symplectic involutions $f$ in one of the three series in the classification introduced by van Geemen and Sarti~\cite{GeeSar}. In~\cite{Voisinsyminv}, C. Voisin  showed in general  that symplectic involutions act trivially on $\CH_0(S)$ for any projective $K3$ surface $S$. The general statement of Conjecture~\ref{blochfinite} for $K3$ surfaces is proved soon after in~\cite{Huybfini} by D. Huybrechts.

\begin{thm}[Huybrechts, Voisin]\label{blochfiniteK3}
Let $S$ be a projective $K3$ surface, $\eta$ be a non-zero holomorphic two-form on $S$, and $f: S\to S$ be a symplectic automorphism of finite order on $S$, then $f$ acts  trivially on $\CH_0(S)$.
\end{thm}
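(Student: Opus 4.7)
The plan is to deduce Theorem~\ref{blochfiniteK3} from Theorem~\ref{RCbase}, concentrating on the new contribution which is the proof of the symplectic involution case (Theorem~\ref{syminv}); the extension to arbitrary finite-order symplectic automorphisms follows from the argument of Huybrechts~\cite{Huybfini}, which builds on the same strategy.

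Let $f$ be a symplectic automorphism of finite order on $S$, and fix $n \geq 2$. I would consider the dominant rational map $\pi: S^{[n]} \dto B$ given by the rational quotient of $S^{[n]}$ with respect to the equivalence relation generated by the following elementary move: $\xi_1 \sim \xi_2$ whenever $\xi_1 = \xi_0 \cup \{x\}$ and $\xi_2 = \xi_0 \cup \{y\}$ for some $x, y$ lying on a common (possibly singular) rational curve of $S$. Such a rational quotient exists by Campana's theorem. The hypothesis that $S$ contains infinitely many rational curves implies that the union of such curves is Zariski-dense in $S$, and hence through a general point of $S$ passes a rational curve. It follows that each equivalence class of $\pi$ has positive dimension, and so $\dim B < \dim S^{[n]} = 2n$. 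Theorem~\ref{RCbase} then gives that $B$ is rationally connected; in particular $\CH_0(B) = \bZ$.

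Since $f$ permutes rational curves on $S$, the induced automorphism $f^{[n]}$ of $S^{[n]}$ preserves the equivalence relation and descends to an automorphism $\bar f$ of $B$, which acts as the identity on $\CH_0(B) = \bZ$. Translating back via $\pi$: for general $\xi$, the points $\pi(\xi)$ and $\pi(f^{[n]}(\xi))$ are rationally equivalent in $B$, so there exist $\xi', \xi'' \in S^{[n]}$ with $\pi(\xi') = \pi(\xi)$, $\pi(\xi'') = \pi(f^{[n]}(\xi))$, and a chain of rational curves in $S^{[n]}$ connecting $\xi'$ to $\xi''$. Since the elementary moves defining $\pi$ themselves give rational equivalences in $\CH_0(S^{[n]})$ --- by Beauville-Voisin, all points on a rational curve in $S$ represent the same class in $\CH_0(S)$, hence all corresponding subschemes of $S^{[n]}$ are rationally equivalent --- we obtain $\xi \equiv f^{[n]}(\xi)$ in $\CH_0(S^{[n]})$. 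Via the summation map $\CH_0(S^{[n]}) \to \CH_0(S)$, this yields $\sum_i x_i \equiv \sum_i f(x_i)$ in $\CH_0(S)$ for a general $\xi = \{x_1, \ldots, x_n\}$. Varying $\xi$ (for instance, fixing all but one $x_i$ at a fixed point of $f$) and invoking additivity then gives $f_*(x) = x$ in $\CH_0(S)$ for all $x$.

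The main obstacle will be rigorously verifying the dimension bound $\dim B < \dim S^{[n]}$: one must ensure that the iterated elementary moves generate equivalence classes of genuinely positive dimension through a general $\xi \in S^{[n]}$, which uses Voisin's theory of constant cycle subvarieties of $S^{[n]}$ combined with the incidence geometry of the universal family of rational curves on $S$. A subtlety is that my argument aims to treat symplectic automorphisms uniformly, whereas Voisin's original proof in~\cite{Voisinsyminv} is more delicate and specific to involutions; the power of Theorem~\ref{RCbase} is precisely that it bypasses such case-by-case difficulties by replacing them with a single rational connectedness statement.
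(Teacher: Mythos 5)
Your proposal does not follow the paper's route and, more importantly, it cannot work as written. The decisive symptom is that your argument never uses the hypothesis $f^*\eta=\eta$: every automorphism of $S$ permutes rational curves, so if the argument were correct it would show that \emph{every} finite-order automorphism acts trivially on $\CH_0(S)$. This is false --- for an anti-symplectic involution, triviality of the action on $\CH_0(S)$ would force triviality of the action on $H^0(S,\Omega_S^2)$ by Mumford/Bloch--Srinivas, a contradiction. The failure is located in two places. First, the quotient $B$ does not exist with $\dim B<\dim S^{[n]}$: rational curves on a non-uniruled $K3$ are rigid, so they form a countable (not positive-dimensional) family, and a very general point of $S$ lies on none of them; the equivalence class of a very general $\xi\in S^{[n]}$ under your elementary moves is therefore a single point, and Campana's theorem does not apply to a countable union of rigid cycles. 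Second, even granting such a $B$, the transfer back is unjustified: rational connectedness of $B$ produces chains of rational curves in $B$, not in $S^{[n]}$, and they cannot be lifted --- if both $B$ and the general fiber of $\pi$ were rationally connected and curves lifted, $S^{[n]}$ itself would be rationally connected, contradicting $h^{2,0}(S^{[n]})\neq 0$. To descend the cycle map to $\CH_0(B)$ one needs a correspondence argument in the style of Lemma~\ref{fact2}, which you do not supply and which is exactly where the symplectic hypothesis must enter.

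For comparison, the paper does not prove Theorem~\ref{blochfiniteK3} in full generality by its method at all. Its contribution is an alternative proof of the involution case (Theorem~\ref{syminv}): the correspondence $\Gamma=\Delta_S-\Gamma_\imath$ factors through the universal Prym variety $\cP_{(S,H)}(\wt{\cC}/\cC)$ of the quotient $S/\imath$, which has dimension $2g-1<2g=\dim S^{[g]}$, so Theorem~\ref{RCbase} applies to a compactification $\ol{\cP}$ (for $(S,H)$ general in moduli, then for all by openness of rational connectedness), giving $\CH_0(\ol{\cP})=\bZ$ and hence $\Gamma_*=0$. The concluding remark of the paper explains that for a symplectic automorphism of order $d>2$ the analogous Prym variety has dimension $(d-1)(g-1)+g\ge 2g$, so the dimension hypothesis of Theorem~\ref{RCbase} fails; the general finite-order case is due to Huybrechts by entirely different (derived-category, case-by-case) methods, the opposite of the ``uniform treatment'' your proposal claims to achieve.
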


As was shown by Nikulin in~\cite{Nikulin}, the only possible orders of $f$ range from one to eight. In Huybrechts' proof, he studied case by case according to these finitely many possible orders using  derived technique and Garbagnati and Sarti's classification results~\cite{GarSar} on lattices of the invariant part $H^2(X,\bZ)^f$ of the action of symplectic automorphism $f$ with prime order.

The main construction in Voisin's proof~\cite{Voisinsyminv} of Theorem~\ref{syminv} is the factorization 
\begin{equation}\label{factor}
\begin{tikzpicture}
\centering
\matrix (m) [matrix of math nodes, row sep=1.5em,
column sep=1.5em, text height=1.5ex, text depth=0.25ex]
{  S^{[g]} &  & \CH_0(S) \\
 & \cP_{(S,H)}(\wt{\cC} / \cC) &    \\};
\path[densely dashed, ->,font=\scriptsize]
(m-1-1) edge node[below left] {$\gamma_S$} (m-2-2);
\path[ ->,font=\scriptsize]
(m-1-1) edge node[auto] {$\Gamma_*$} (m-1-3)
(m-2-2) edge node[auto] {} (m-1-3);
\end{tikzpicture}
\end{equation} 
where $\cP_{(S,H)}(\wt{\cC} / \cC)$ is the universal Prym variety associated to a complete linear system of curves of genus $g$ in the quotient surface $S / \imath$. Details of the above construction will be given in Section~\ref{syminvv}. Our main application of Theorem~\ref{RCbase} is the following

\begin{thm}\label{prymRC}
Any smooth projective compactification of $\cP_{(S,H)}(\wt{\cC} / \cC)$ is rationally connected.
\end{thm}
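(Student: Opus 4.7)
The plan is to apply Theorem~\ref{RCbase} to the rational map $\gamma_S \colon S^{[g]} \dto \cP_{(S,H)}(\wt{\cC}/\cC)$ of the factorization~(\ref{factor}). Fix a smooth projective compactification $\ol B$ of $\cP_{(S,H)}(\wt{\cC}/\cC)$; since rational connectedness is a birational invariant of smooth proper varieties, it will be enough to verify the hypotheses of Theorem~\ref{RCbase} for the composed dominant rational map $S^{[g]} \dto \ol B$.

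The first ingredient is that $\gamma_S$ is dominant. This is the main geometric content of Voisin's paper~\cite{Voisinsyminv} and is built into her explicit description of $\gamma_S$ in terms of Abel--Jacobi--type maps on the universal curve $\wt{\cC} \to |H|$; I would simply quote it.

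The second ingredient is the dimension count $\dim \ol B < 2g = \dim S^{[g]}$. The variety $\cP_{(S,H)}(\wt{\cC}/\cC)$ fibers over the linear system $|H|$ on the Nikulin K3 surface $\wt\Sigma$ resolving $S/\imath$. Since $H^2 = 2g - 2$, Riemann--Roch gives $\dim |H| = g$. A general $C\in|H|$ avoids the eight exceptional $(-2)$-curves of $\wt\Sigma \to S/\imath$, so the induced double cover $\wt C \to C$ coming from $S \to S/\imath$ is étale; Riemann--Hurwitz then gives $g(\wt C) = 2g-1$, and the Prym fibre has dimension $g-1$. Altogether $\dim \ol B = 2g - 1 < 2g$, and in particular $\ol B$ is not a point as soon as $g\ge 2$, which is automatic in Voisin's construction.

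The third ingredient is that $S$ contains infinitely many rational curves; this is the classical theorem of Bogomolov--Mumford together with its extension to all projective polarizations due to Chen, Gounelas--Liedtke and others. Theorem~\ref{RCbase} then applies and produces the rational connectedness of $\ol B$. The only genuinely non-routine step is the first: establishing dominance of $\gamma_S$ is the real geometric input and is the whole point of the orbit construction of~\cite{Voisinsyminv}. Once it is granted, Theorem~\ref{RCbase} does all the remaining work through the general principle that a low-dimensional dominant image of $S^{[g]}$ on a K3 surface with infinitely many rational curves must itself be rationally connected.
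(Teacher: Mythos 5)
Your proposal is correct, and its core is the same as the paper's: apply Theorem~\ref{RCbase} to the dominant rational map $\gamma_S : S^{[g]} \dto \cP_{(S,H)}(\wt{\cC}/\cC)$ together with the dimension count $\dim \cP_{(S,H)}(\wt{\cC}/\cC) = g + (g-1) = 2g-1 < 2g = \dim S^{[g]}$. The one place where you genuinely diverge is in verifying the hypothesis that $S$ contains infinitely many rational curves. You invoke the statement that \emph{every} projective $K3$ surface contains infinitely many rational curves (Chen--Gounelas--Liedtke); the paper does not have this at its disposal and uses only the Li--Liedtke result for $K3$ surfaces of odd Picard rank. Concretely, the paper observes that the very general member of the $11$-dimensional moduli space $\cM_\mathrm{syminv,H}$ has Picard rank $9$, applies Theorem~\ref{RCbase} to those very general $(S,H)$, and then propagates rational connectedness of $\ol{\cP}$ to the general member using the openness lemma of Section~\ref{equiv} together with the fact that $\ol{\cP}$ can be constructed in a family over a Zariski open subset of $\cM_\mathrm{syminv,H}$. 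Your route, granting the stronger input, is shorter --- it needs no moduli space and no deformation argument --- and it delivers the conclusion for \emph{every} pair $(S,H)$, which is what the statement of Theorem~\ref{prymRC} literally asserts, whereas the paper's Corollary~\ref{maincor} only obtains it for a general point of $\cM_\mathrm{syminv,H}$. The trade-off is the strength of the cited input: at the time the paper was written the unconditional infinitude of rational curves on projective $K3$ surfaces was not yet a theorem, which is exactly why the deformation/openness detour appears there. Both arguments treat the dominance of $\gamma_S$ as given from Voisin's construction, and your dimension count (étale double cover, $g(\wt{C}) = 2g-1$, Prym fibre of dimension $g-1$ over the $g$-dimensional linear system $|H|$) matches the paper's, so there is no gap.
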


The organization of this paper is as follows. In Section~\ref{equiv}, we will recall some well-known facts concerning rationally connected varieties and use them to reformulate Theorem~\ref{RCbase}. Then we will prove Theorem~\ref{RCbase} in Section~\ref{proof1}. In  Section~\ref{syminvv}, we will explain how Theorem~\ref{prymRC} gives an alternative proof of Theorem~\ref{syminv}.

\section*{Acknowledgement}
I would like to thank my supervisor Claire Voisin for helpful discussions, her kindness in sharing her ideas, and her patience during this work. I also thank De-Qi Zhang for the simplification of the original proof of Theorem~\ref{RCbase}.

\section{Remarks on rationally connected varieties}\label{equiv}

In this section, we first recall the definition of MRC-fibrations and introduce the main theorem of Graber-Harris-Starr in~\cite{GHS}. Then we will use them to give a reformulation of Theorem~\ref{RCbase}. 

Recall from~\cite{KMM} that for any variety $X$, there exists a rational map $\phi : X \dto B$ unique up to birational equivalence characterized by the following properties:
\begin{enumerate}[(i)]
\item a general fiber of $\phi$ is rationally connected;
\item for a  general point $b\in B$, any rational curve passing through $X_b \colonec \phi^{-1}(b)$ is actually contained in $X_b$.
\end{enumerate}

The map $\phi : X \dto B$ is called the \emph{maximal rationally connected} (or MRC for short) \emph{fibration of $X$}. The fundamental question  whether the base $B$ of an MRC-fibration is not uniruled remained open for  a while~\cite[Conjecture IV.5.6]{Kollarrat}. It was finally answered by T. Graber, J. Harris, and J. Starr as a direct corollary of their main theorem in their  paper~\cite{GHS}: 

\begin{thm}[Graber-Harris-Starr]\label{GHS}
Let $g: X \to C$ be a proper morphism between complex varieties where $C$ is a smooth curve. If the general fiber of $g$ is rationally connected, then $g$ has a section.
\end{thm}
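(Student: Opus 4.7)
The plan is to reproduce the comb-smoothing strategy of Graber, Harris, and Starr. After resolving singularities of $X$ and restricting to the open locus $C^\circ \subset C$ over which $g$ is smooth (a section over $C^\circ$ extends to a section of $g$ since $g$ is proper and $C$ is a smooth curve), we may assume $X$ is smooth projective, $g$ is flat, and every smooth fibre is rationally connected; in particular, each smooth fibre contains very free rational curves through every point.

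The first concrete step is to produce an irreducible multisection: taking a general complete intersection of very ample divisors on $X$ of appropriate codimension yields a smooth irreducible curve $D \subset X$ with $g|_D : D \to C$ finite surjective of some degree $d \ge 1$. If $d = 1$ we are done, so assume $d \ge 2$. At many general points $p_1, \dots, p_n \in D$, whose images in $C$ lie over smooth fibres, attach a very free rational curve $T_i \subset X_{g(p_i)}$ passing through $p_i$. The resulting nodal \emph{comb} $\widetilde D = D \cup T_1 \cup \cdots \cup T_n$ still maps finitely onto $C$ via $g$, but now carries substantial positivity along its teeth.

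The main technical obstacle, and the heart of the Graber--Harris--Starr argument, is a smoothing-plus-degeneration step: one shows that for $n$ large enough the normal sheaf $N_{\widetilde D/X}$, appropriately twisted down at the nodes, is globally generated with vanishing $H^1$, so that $\widetilde D$ deforms flatly inside $X$ to a nearby smooth curve $D'$; then, by specialising the attachment points of the teeth into a single fibre and smoothing again in a controlled way, one extracts from a suitable limit curve an irreducible component that is a genuine section of $g$. The positivity required for the $H^1$-vanishing is supplied by the very freeness of the teeth, whose normal bundles are ample enough to dominate the a priori negative contribution of $N_{D/X}$; this is exactly why the hypothesis that the generic fibre be rationally connected (rather than merely uniruled) is essential. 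The delicate bookkeeping step is to track how the degree of the map to $C$ drops during the degeneration and to ensure that a section component actually splits off in the limit, rather than a further multisection of positive degree. Once such a component is produced, extending it across the finitely many bad points of $C$ by properness gives the desired section of $g$.
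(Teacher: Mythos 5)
The paper does not prove this statement at all: Theorem~\ref{GHS} is imported verbatim from the paper of Graber--Harris--Starr and used as a black box (the only argument the paper spells out in this vicinity is the deduction of Corollary~\ref{mrc} from it). So there is no in-paper proof to compare yours against; what you have written is an outline of the original \emph{external} proof.

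Judged as a proof, your proposal has a genuine gap, and you essentially flag it yourself. The reductions at the start are fine (resolving $X$, extending a section across the bad locus by properness and the valuative criterion, producing an irreducible multisection $D$ from a general complete intersection, assembling the comb $\widetilde D$ with very free teeth). But the entire content of the theorem lives in the step you describe as ``the main technical obstacle'': proving that for $n$ large a suitable subcomb smooths with the required vanishing of $H^1$ of the twisted normal sheaf, and --- much harder --- running the degeneration so that a component of degree exactly $1$ over $C$ splits off the limit rather than another multisection of degree $\ge 2$. You assert both, but neither is established; in particular you give no mechanism that forces the degree over $C$ to drop, which is precisely where Graber--Harris--Starr need their careful analysis of the space of stable maps and the choice of which teeth to retain. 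There is also a small inaccuracy: very free curves through every point exist in the \emph{rationally connected} smooth fibres, i.e.\ the general ones, not in every smooth fibre, so the points $p_i$ must be chosen over a suitable dense open subset of $C$. In short, the skeleton is the right one, but as written this is a summary of a known proof rather than a proof.
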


\begin{rem}
This theorem was later generalized by Starr and de Jong to varieties defined over an arbitrary algebraically closed field: any proper morphism from a smooth variety to a smooth curve whose general fibers are smooth and separably rationally connected has a section~\cite{deJongStarr}.
\end{rem}

In particular, Theorem~\ref{GHS} implies:

\begin{cor}[Graber-Harris-Starr~\cite{GHS}]\label{mrc}
Let $g: X \dto Z$ be a maximal rationally connected fibration where $X$ is an irreducible variety, then $Z$ is not uniruled. 
\end{cor}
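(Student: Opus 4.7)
The plan is to argue by contradiction, using Theorem~\ref{GHS} to construct a rational curve in $X$ that violates the defining property (ii) of the MRC fibration.  Assume that $Z$ is uniruled.  Up to replacing $X$ and $Z$ by smooth birational models, we may assume the MRC fibration $\phi : X \to Z$ is an actual proper morphism whose general fiber is rationally connected, and we may also assume that property (ii) holds over some dense open $U \subseteq Z$.

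Since $Z$ is uniruled, through a general point $z \in U$ passes a (possibly singular) rational curve $C \subset Z$.  Let $\nu : \bar C \cong \bP^1 \to C \hookrightarrow Z$ denote the normalization, and consider the base change
\[
X_{\bar C} := X \times_Z \bar C \longrightarrow \bar C.
\]
This is a proper morphism to a smooth curve.  Because $z \in U$ is general and because rational connectedness of the fibers is an open property on $Z$, the general fiber of $X_{\bar C} \to \bar C$ is rationally connected (being isomorphic to a general fiber of $\phi$).

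Theorem~\ref{GHS} therefore applies and yields a section $s : \bar C \to X_{\bar C}$.  Composing with the projection $X_{\bar C} \to X$ gives a morphism $\bar C \to X$ whose image is a rational curve $R \subset X$ with $\phi(R) = C$.  In particular $R$ meets the fiber $X_z = \phi^{-1}(z)$ but is not contained in it, since $\phi|_R$ surjects onto the positive-dimensional curve $C$.  This directly contradicts property (ii) of the MRC fibration at the general point $z$, and forces $Z$ to be non-uniruled.

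The main technical point is ensuring the hypotheses of Theorem~\ref{GHS} are genuinely satisfied: one needs a proper morphism to a \emph{smooth} curve with rationally connected general fibers, which is why the normalization step $\bar C \to C$ is essential, and one needs the base point $z$ to lie both in the locus where property (ii) holds and in the locus where the fibers of $\phi$ are rationally connected — both are open dense conditions in $Z$, so a general uniruling curve $C$ meets their intersection. Everything else is formal.
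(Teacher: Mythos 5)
Your proof is correct and follows essentially the same route as the paper: resolve to make the MRC fibration a morphism, normalize a uniruling curve $C$ through a general point, base change, apply Theorem~\ref{GHS} to get a section, and observe that its image is a non-contracted rational curve through a general fiber, contradicting property (ii). The only cosmetic difference is that the paper also desingularizes the fiber product $X \times_Z \wt{C}$ before invoking Theorem~\ref{GHS}, which is not needed for the theorem as stated.
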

Let us recall for completeness the proof of the above corollary.

\begin{proof}[Proof]
After resolving the rational map $g$ and singularities of $X$, we can assume that  $g$ is a morphism and $X$ is smooth. Suppose that $Z$ were uniruled, then there exists a rational curve $C$ passing through a general point of $Z$ and we can suppose that $g$ is dominant on $C$. Denote by $\wt{C}$ the normalization of $C$. Up to replacing  $X \times_Z \tilde{C}$ by its desingularization, the map $ g_C : X \times_Z \tilde{C}  \to \tilde{C}$ has a section $D$ by Theorem~\ref{GHS}, which is also a rational curve. Thus if $y \in D$, the rational curve $D$ passes through $y$ and is not contracted by $g$, which is absurd because $g: X \to Z$ is an MRC-fibration.
\end{proof}

\begin{rem}
An equivalent formulation of Corollary~\ref{mrc}  is the following: if $f: X \dto B$ is a dominant map such that both the general fibers of $f$ and the base $B$ are rationally connected, then $X$ is rationally connected as well~\cite[Proposition IV.5.6.3]{Kollarrat}.
\end{rem}

Thanks to Corollary~\ref{mrc}, we can easily show that

\begin{cor}\label{equi}
In Theorem~\ref{RCbase}, it is equivalent to show that $B$ is either a point or uniruled.
\end{cor}

\begin{proof}[Proof] Only one direction needs to be proved. Assume that Theorem~\ref{RCbase} is true when replacing "rationally connected" with "uniruled". Suppose $B$ is not a point; let $B \dto B'$ be an MRC fibration of $B$. Since $B'$ is not uniruled by Corollary~\ref{mrc}, we deduce that $B'$ is a point by assumption. Hence $B$ is rationally connected.
\end{proof}





\section{Proof of Theorem~\ref{RCbase}}\label{proof1}

Let us first prove the following general result.
\begin{lem} \label{dom}
Let $f : X \dto B$ be a rational dominant map between smooth projective varieties such that $\dim X > \dim B$. If $D$ is an ample divisor on $X$, then the restriction map $f_{|D}$ is still dominant provided $B$ is not uniruled.
\end{lem}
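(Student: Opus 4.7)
The plan is to prove the contrapositive: assuming that $f_{|D}$ is not dominant, I will produce rational curves through a general point of $B$, contradicting the non-uniruledness hypothesis. First, resolve indeterminacy: by Hironaka there is a birational morphism $\pi : \tilde X \to X$ from a smooth projective variety $\tilde X$ such that $\tilde f := f \circ \pi : \tilde X \to B$ is a morphism, and by blowing up only inside $\mathrm{Ind}(f)$ one may arrange $\pi(\mathrm{Exc}(\pi)) \subseteq \mathrm{Ind}(f)$, a closed subset of codimension $\geq 2$ in $X$. After Stein factorisation, assume $\tilde f$ has irreducible general fibres. Let $\tilde D$ be the strict transform of $D$, so that $\pi^* D = \tilde D + E$ with $E$ an effective $\pi$-exceptional divisor. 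Non-dominance of $f_{|D}$ means $Z := \tilde f(\tilde D) \subsetneq B$; set $V := B \setminus Z$. Writing $n := \dim X$ and $r := \dim B$, for a very general $b \in V$ the fibre $F_b := \tilde f^{-1}(b)$ is smooth, irreducible, projective of dimension $n - r \geq 1$, disjoint from $\tilde D$, and not contained in $\mathrm{Exc}(\pi)$.

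The key geometric input is the inclusion $\pi(F_b) \cap D \subseteq \mathrm{Ind}(f) \cap D$: indeed, if $y \in \pi(F_b) \cap D$ had a preimage $\tilde y \in F_b$ outside $\mathrm{Exc}(\pi)$, then $\pi$ would be a local isomorphism at $\tilde y$ with $\pi^{-1}(D) = \tilde D$ near $\tilde y$, forcing $\tilde y \in \tilde D \cap F_b = \emptyset$. Since $D$ is ample and $\pi(F_b)$ is a projective subvariety of dimension $n - r$ not contained in $D$, the intersection $\pi(F_b) \cap D$ is non-empty of pure dimension $n - r - 1$, whereas $\mathrm{Ind}(f) \cap D$ has dimension at most $n - 2$. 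I would then run the standard incidence count on
\begin{equation*}
I := \{(x, b) \in (\mathrm{Ind}(f) \cap D) \times V : x \in \pi(F_b)\} :
\end{equation*}
its projection to $V$ is dominant with fibres of dimension $n - r - 1$, so $\dim I \geq n - 1$; its projection to $\mathrm{Ind}(f) \cap D$ lies in a variety of dimension $\leq n - 2$, so for a general $x$ in the image the fibre $\{b \in V : x \in \pi(F_b)\} = \tilde f(\pi^{-1}(x)) \cap V$ has dimension at least $1$.

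To finish, invoke the classical fact that in characteristic zero every fibre $\pi^{-1}(x)$ of a birational morphism between smooth projective varieties is rationally chain connected, via a reduction to a sequence of blow-ups of smooth centres dominating $\pi$. Consequently $\tilde f(\pi^{-1}(x))$ is a rationally chain connected subvariety of $B$ of positive dimension, hence carries a rational curve through each of its points. As $x$ ranges over a dense subset of the image of $I$ in $\mathrm{Ind}(f) \cap D$, the loci $\tilde f(\pi^{-1}(x))$ sweep out $V$, producing a rational curve through a general point of $B$ and contradicting the non-uniruledness of $B$.

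The main obstacle I anticipate is the first geometric step: turning the mere avoidance $F_b \cap \tilde D = \emptyset$ into the containment $\pi(F_b) \cap D \subseteq \mathrm{Ind}(f) \cap D$, which is precisely what allows the dimension count to unearth positive-dimensional rationally chain connected loci in $B$ from the exceptional fibres of $\pi$.
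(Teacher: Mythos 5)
Your proof is correct, but it takes a genuinely different route from the paper's. The paper also resolves $f$ by $\pi:\wt X\to X$, but then argues directly: it shows the total transform $\pi^{-1}(D)$ dominates $B$ by a positivity argument ($\pi^*[D]$ is big, and it pairs positively with the movable class $\tilde f^*[\mathrm{pt}]\cdot H^{k-1}$ of a general fibre curve, via the Boucksom--Demailly--P\u{a}un--Peternell duality), and then passes from the total to the strict transform by choosing a very general $y\in B$ with no rational curve through it and observing that the $\pi$-fibre through a preimage of $y$ lying over $D$ is rationally chain connected, hence contracted to $y$, and meets $\wt D$. You instead prove the contrapositive: non-dominance of $f_{|D}$ forces $\pi(F_b)\cap D\subseteq \mathrm{Ind}(f)\cap D$, and your incidence count over this $(\le n-2)$-dimensional set produces positive-dimensional rationally chain connected loci $\tilde f(\pi^{-1}(x))$ sweeping out $B$, contradicting non-uniruledness. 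Both arguments rest on the same two nontrivial inputs --- non-uniruledness of $B$ and rational chain connectedness of the fibres of the resolution $\pi$ --- but you replace the BDPP step entirely by the elementary fact that an ample divisor meets every positive-dimensional projective subvariety (note that your observation $\pi(F_b)\cap D\neq\emptyset$, i.e.\ $F_b\cap\pi^{-1}(D)\neq\emptyset$, is precisely the paper's first step, obtained more cheaply since $D$ is ample rather than merely big). Two points of your write-up deserve a word of justification but are not gaps: that $\pi(F_b)\not\subset D$ (otherwise $F_b\subseteq \wt D\cup\mathrm{Exc}(\pi)$, contradicting irreducibility, $F_b\cap\wt D=\emptyset$ and $F_b\not\subset\mathrm{Exc}(\pi)$), and that the dominating component of $I$ is the one carrying the $(n-r-1)$-dimensional fibres over general $b\in V$ (true since the non-dominating components miss the general fibre).
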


\begin{rem}
The following example shows that the hypothesis of $B$ not being uniruled is essential in the above lemma. Let $l$ be a line in $\bP^2$ and $p$ a point in $\bP^2$ which is not contained in $l$. If $f : \bP^2 \dto l$ is the projection  from $p$, then $f$ is dominant and every line in $\bP^2$ containing $p$ is contracted to a point under $f$. 
\end{rem}

\begin{rem}\label{birat}
Before we start the proof, we remark that any birational modification of the rational map $f : S^{[k]} \dto B$ and the base $B$ will not affect the hypotheses and the conclusion in Theorem~\ref{RCbase}. For instance up to desingularization of $B$, we can always suppose that $B$ is smooth and complete. This kind of modification will be repeatedly used in the proof of Theorem~\ref{RCbase}. 
\end{rem}

\begin{proof}[Proof of Lemma~\ref{dom}]
Let $\tilde{f} : \wt{X} \to B$ be a resolution of $f$ after a sequence of blow-ups $\pi : \wt{X} \to X$ of $X$. Denote by $\wt{D}$ the proper transform of $D$ and $D' \colonec \pi^{-1}(D)$. Since $D$ is ample and $[D'] = \pi^*[D]$ in $\Pic(\wt{X})$, 
one deduces that $D'$ is a big divisor. Suppose $k = \dim X - \dim B$ and denoted by $H \in  NS(\wt{X})$ the class of an ample divisor on $\wt{X}$. As $\tilde{f}$ is dominant, the class $\tilde{f}^*[x] \cdot H^{k-1} \in  H_2(X,\bR)$, where $x$ is a point of $B$, is a class of a movable curve. So $[D'] \cdot \tilde{f}^*[x] \cdot H^{k-1} >0$~\cite[Corollary $2.5$]{BDPP}, hence the restriction $\tilde{f}_{|D'}$ is  dominant.

Now let $y$ be a very general point of $B$ such that there is no rational curve passing through $y$.  We know that there exists $x \in D'$  such that $  y = \tilde{f}(x)$. As $\pi(x) \in D$, the fiber $E_x \colonec \pi^{-1}(\pi(x))$ intersects $\wt{D}$.  On the other hand, this fiber is connected and rationally connected.  As there is no rational curve passing through $y$, the fiber $E_x$ is contracted to $y$ by $\tilde{f}$. As $E_x$ meets $\wt{D}$, we conclude that $\tilde{f}^{-1}(y) \cap \wt{D} \ne \emptyset$, so $\tilde{f}_{|\wt{D}}$ (hence $f_{|D}$) is dominant.
\end{proof}

\begin{proof}[Proof of Theorem~\ref{RCbase}]

Assuming $B$ is not uniruled, by Corollary~\ref{equi} it suffices to show that $B$ is a point.
Using the natural map $S^k \dto S^{[k]}$, instead of dealing with $S^{[k]} \dto B$ it is equivalent to look at the maps $S^k \dto B$ which are invariant under the action of $\fS_k$ on $S^k$ by permutation. 

Let $D$ be a rational curve lying in an ample linear system of $S$ (whose existence is due to Bogomolov-Mumford~\cite{MM}). Since $\cO(D)^{\boxtimes k}$ is ample on $S^k$, we deduce by Lemma~\ref{dom} that the restriction of $f$ to $\cup_{i=1}^k S^{i-1} \times D \times S^{k-i}$ (with $S^0 \times D \times S^{k-1} = D \times S^{k-1}$ and $S^{k-1} \times D \times S^{0} = S^{k-1} \times D$) is dominant. As this union is finite, we can suppose without loss of generality that the restriction to $D \times S^{k-1}$ of $f$ is dominant. (In fact, since $f$ is symmetric, this is always the case.) 




Since $D \times S^{k-1} \dto B$ is dominant, for a general point $z \in S^{k-1}$ the rational curve $D \times z$ is contracted to a point by $f$ (whenever defined). Up to birational equivalence of $B$, we can suppose that $D \times z$ is contracted to a point for every point $z \in S^{k-1}$ by Remark~\ref{birat}. Since $D \times z$ is ample in $S \times z$, the fibers of the restriction map $f_z \colonec f_{|S \times z}$ have positive dimension (which is a consequence of Hodge index theorem). So  either $S \times z$ is contracted to a curve $C_z$ or to a point. 

Next, suppose that $S \times z$ is contracted to a curve $C_z$.  Since $0 = q(S) \ge q(C_z)$, $C_z$ is necessarily a rational curve. 
Set
$$U \colonec \bigcup_{S \times z \text{ contracted to a curve}} S\times z \subset S \times S^{k-1}.$$
Since by assumption $B$ is not uniruled, the restriction of $f$ to $U$ is not dominant. Therefore up to birational modification of $B$, we can suppose that $f$ contracts $S \times z$ to a point for any $z \in S^{k-1}$. As $f$ is symmetric, we deduce that for all $0 < i \le k$, the map $f$ contracts $z \times S \times z'$ to a point for any $z \in S^{i-1}$ and $z' \in S^{k-i}$. Therefore the image of $f$ is a point, and we are done. 

\end{proof}


\section{Triviality of symplectic involution actions on $\CH_0(S)$}\label{syminvv}

The aim of this section is to give an alternative proof of Theorem~\ref{syminv} that symplectic involutions of a $K3$ surface $S$ act as the identity map on $\CH_0(S)$ using Theorem~\ref{RCbase}. 

Let $\imath$ be a symplectic involution of $S$ and let $\Gamma = \Delta_S - \Gamma_\imath \in \CH^2(S \times S)$ where $\Gamma_\imath \in S \times S$ is the graph of $\imath$. Before we start the proof, let us recall the factorization of $\Gamma_* : S^{[g]} \to \CH_0(S)$ constructed by Voisin in~\cite{Voisinsyminv}, which is used in an essential way both in Voisin's proof and ours. 

\subsection{Prym varieties and a factorization of $\Gamma_* : S^{[g]} \to \CH_0(S)$}
 
Let $\pi : \wt{C} \to C$ be an étale double cover of a smooth curve $C$ and consider the involution $i:\wt{C} \to \wt{C}$ that interchanges the preimages of any point $p \in C$. This involution $i$ induces an endomorphism on the Jacobian of $\wt{C}$ denoted $i_* : J(\wt{C}) \to J(\wt{C})$, and the Prym variety of $\wt{C} \to C$  is defined as 
$$ \Prym_{\wt{C}/C} = \Ima\(\Id - i_*\) = \Ker\(\Id + i_*\)^\circ,$$
where for any algebraic group $G$, $G^{\circ}$ denotes the connected component containing the identity of $G$.  It is an abelian variety carrying a principal polarization and it is easy to see that $ \Prym_{\wt{C}/C}$ is also isomorphic to $\Ker \(\pi_*\)^\circ$, where $\pi_*$ is the norm map $\pi_* : J(\wt{C}) \to J({C})$. Using that $\pi_*$ is surjective and the Riemann-Hurwitz formula, we deduce that $\dim \Prym_{\wt{C}/C} = g -1$ where $g$ is the genus of $C$.

Now let $\Sigma \colonec S / \imath$ be the (singular) quotient surface of $S$ by the involution $\imath$. Choose a very ample line bundle $H \in \Pic(\Sigma)$ and assume that $c_1(H)^2 = 2g-2$. Since the canonical line bundle $K_\Sigma$ is trivial, the genus of smooth curves in $|H|$ is $g$ and $h^0(H) = g+1$. 

Let $U \subset S^{[g]}$ be a Zariski open subset parametrizing reduced subschemes $s = s_1 + \ldots +s_g \in S^{[g]}$ such that there exists a unique smooth curve $C_s$ in $|H|$ passing through the image of each $s_i$ in $\gS$. Since $C_s$ is a smooth curve, its inverse image $\wt{C_s} \subset S$ is smooth, connected, and is an étale cover of $C_s$, which contains $s_1, \ldots, s_g$. One notices that $\Gamma_*(s) = \sum_i \([s_i] - \imath_*[s_i]\) \in \CH_0(S)$ does not depend on $\alb_{\wt{C_s}}\(\sum_i \([s_i] - \imath_*[s_i]\)\) \in  \Prym_{\wt{C_s}/C_s}$, thus we obtain the following factorization of ${\Gamma_{|U}}_*: U \to \CH_0(S)$:
\begin{equation}\label{factor}
\begin{tikzpicture}
\centering
\matrix (m) [matrix of math nodes, row sep=1.5em,
column sep=1.5em, text height=1.5ex, text depth=0.25ex]
{  U &  & \CH_0(S) \\
 & \cP_{(S,H)}(\wt{\cC} / \cC) &    \\};
\path[->,font=\scriptsize]
(m-1-1) edge node[below left] {$\gamma$} (m-2-2);
\path[ ->,font=\scriptsize]
(m-1-1) edge node[auto] {${\Gamma_{|U}}_*$} (m-1-3)
(m-2-2) edge node[auto] {$p$} (m-1-3);
\end{tikzpicture}
\end{equation}
where $\cC \to U' \subset |H|$ (resp. $\wt{\cC} \to U'$) is the universal smooth curve over the Zariski open set $U'$ of $|H|$ parametrizing smooth curves (resp. universal family of double coverings over $U'$), $\cP_{(S,H)}(\wt{\cC} / \cC)$ is the corresponding universal Prym varieties over $U'$, and $\gamma$ is defined as 
$$\gamma(s) = \alb_{\wt{C_s}}\(\sum_i \([s_i] - \imath_*[s_i]\)\).$$ 

\ssec{Proof of Theorem~\ref{syminv}} 

With the same notations introduced in the last paragraph, let us first make the factorization~\ref{factor} more precise. Let $I \in \CH^2(S^{[g]}\times S)$ be the class of the incidence correspondence. Notice that $\Gamma_* : S^{[g]} \to \CH_0(S)$ factors through 
$$\Gamma_* : \CH_0(S^{[g]}) \to \CH_0(S),$$ where $\Gamma \colonec I-(\Id_{S^{[g]}},\imath)(I) \in \CH^2(S^{[g]} \times S)$.  Let $\ol{\cP}$ be a smooth compactification of $\cP_{(S,H)}(\wt{\cC} / \cC)$. Let
\begin{equation}\label{factor}
\begin{tikzpicture}
\centering
\matrix (m) [matrix of math nodes, row sep=1.5em,
column sep=1.5em, text height=1.5ex, text depth=0.25ex]
{  & V & \\
S^{[g]} &  & \ol{\cP}   \\};
\path[->,font=\scriptsize]
(m-1-2) edge node[above left] {$p$} (m-2-1)
(m-1-2) edge node[auto] {$q$} (m-2-3);
\path[ dashed, ->,font=\scriptsize]
(m-2-1) edge node[auto] {$\gamma$} (m-2-3);
\end{tikzpicture}
\end{equation}
be a resolution of $\gamma$ defined in the previous paragraph. Since Chow groups of zero-cycles are invariant under birational modifications, the rational map $\gamma$ defines canonically the pushfoward map by $\gamma_* \colonec q_*p^* : \CH_0(S^{[g]}) \to \CH_0(\ol{\cP})$.

\begin{lem}\label{fact2}
There exists a codimension $2$ correspondence $\Gamma' \in \CH^2(\ol{\cP} \times S)$ such that
$$\Gamma'_* \circ {\gamma}_* = {\Gamma}_* : \CH_0(S^{[g]}) \to \CH_0(S).$$
\end{lem}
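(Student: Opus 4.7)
The plan is to realize the set-theoretic factorization $\Gamma_{*} = p \circ (\gamma_S)_{*}$ appearing in the second diagram of the paper at the level of codimension-$2$ correspondences, by constructing $\Gamma'$ as a pushforward along the relative Abel--Prym map. The construction is performed over the open locus $\cU \subset |H|$ parametrizing smooth curves, and then extended to $\ol{\cP}$ by Zariski closure.

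Over $\cU$ one has the universal smooth curve $\wt{\cC}_{\cU} \to \cU$, naturally embedded in $S \times \cU$ via the inclusions $\wt{C_s} \subset S$, together with the universal Prym $\cP_{\cU} \to \cU$. I would form the relative $(g-1)$-th symmetric power $\wt{\cC}_{\cU}^{(g-1)}$, of total dimension $2g-1 = \dim \cP_{\cU}$, and the relative Abel--Prym map
$$\Psi \colon \wt{\cC}_{\cU}^{(g-1)} \dto \cP_{\cU}, \qquad D \longmapsto [D - \imath_{*} D],$$
which, after resolving indeterminacies, becomes a generically finite dominant morphism of some degree $d$. The universal divisor $\cD \subset \wt{\cC}_{\cU}^{(g-1)} \times_{\cU} \wt{\cC}_{\cU} \hookrightarrow \wt{\cC}_{\cU}^{(g-1)} \times S$ is a codimension-$2$ cycle, and applying $\imath$ on the $S$-factor produces a second codimension-$2$ cycle $\cD_{\imath}$. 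The pushforward $(\Psi \times \Id_{S})_{*}(\cD - \cD_{\imath}) \in \CH^{2}(\cP_{\cU} \times S)$ extends by Zariski closure to a codimension-$2$ class $\Gamma' \in \CH^{2}(\ol{\cP} \times S)$, which is the desired correspondence (possibly up to the scalar $d$; see below).

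To verify the identity $\Gamma'_{*} \circ (\gamma_S)_{*} = \Gamma_{*}$, I would evaluate both sides on the class of a general point $s = \{s_{1}, \dots, s_{g}\} \in S^{[g]}$. Then $(\gamma_S)_{*}[s] = [\gamma_S(s)]$, and the projection formula applied to $\Psi \times \Id_{S}$ together with generic flatness yields
$$\Gamma'_{*}[\gamma_S(s)] = \sum_{j=1}^{d} (D_{j} - \imath_{*} D_{j}) \in \CH_{0}(S),$$
where $D_{1}, \dots, D_{d} \in \wt{C_s}^{(g-1)}$ are the preimages of $\gamma_S(s)$ under $\Psi$. By construction each $D_{j} - \imath_{*} D_{j}$ has the same class as $\sum_{i} (s_{i} - \imath_{*} s_{i})$ in $\Prym_{\wt{C_s}/C_s} \subset J(\wt{C_s})$; as the Prym is a sub-abelian variety of $J(\wt{C_s})$, these two degree-$0$ divisors are linearly equivalent on $\wt{C_s}$, and hence rationally equivalent as $0$-cycles in $S$. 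The sum therefore equals $d \cdot \sum_{i} ([s_{i}] - \imath_{*} [s_{i}]) = d \cdot \Gamma_{*}[s]$.

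The main obstacle is to arrange for the multiplicity $d$ to be $1$. One either verifies that the relative Abel--Prym map $\Psi$ is birational in our setting (as is expected for generic Pryms of étale double covers of curves of genus $g \geq 2$), or reformulates the lemma with $\bQ$-coefficients; either option suffices for the downstream application, since the combination with the rational connectedness of $\ol{\cP}$ (Corollary~\ref{maincor}) forces $\Gamma_{*}$ to factor through $\CH_{0}(\ol{\cP}) \otimes \bQ = \bQ$, and the scalar $d$ can be harmlessly absorbed in the conclusion of finite-dimensionality of $\Ima(\Gamma_*)$.
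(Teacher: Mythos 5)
Your construction is genuinely different from the paper's, and the difference matters. The paper never passes through the symmetric product: it takes the universal Poincar\'e divisor on $\cJ \times_U \wt{\cC}$, restricts it to $\cP_{(S,H)}(\wt{\cC}/\cC) \times_U \wt{\cC}$, and pushes the closure of this divisor into $\ol{\cP}\times S$ via the tautological rational map $\ol{\cP}\times_{\ol{U}}\ol{\cC} \dto \ol{\cP}\times S$. That correspondence sends a point $p\in \Prym_{\wt{C}/C}$, with multiplicity one, to the image in $S$ of a degree-zero divisor on $\wt{C}$ representing $p$, and the identity $\Gamma'_*\circ{\gamma_S}_*=\Gamma_*$ then follows from the single observation that linear equivalence on $\wt{C_s}\subset S$ implies rational equivalence in $\CH_0(S)$ --- the same key point you invoke at the end of your verification.

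The genuine gap in your route is the multiplicity $d$ that you yourself flag, because your first proposed fix is false: the relative Abel--Prym map $\Psi:\wt{\cC}_{\cU}^{(g-1)}\dto\cP_{\cU}$ is \emph{not} birational. The Abel--Prym curve $\wt{C}\to\Prym_{\wt{C}/C}$ has cohomology class twice the minimal class $[\Xi]^{g-2}/(g-2)!$, so the $(g-1)$-fold sum map $\wt{C}^{(g-1)}\to\Prym_{\wt{C}/C}$ has degree $2^{g-1}$, not $1$; there is also an unaddressed normalization issue, namely that $D-\imath_*D$ for $\deg D=g-1$ and $\sum_i(s_i-\imath_* s_i)$ for $g$ points need not lie in the same component of $\Ker(\Id+\imath_*)$, so $\Psi$ a priori lands in a translate of $\cP_{\cU}$ and must be corrected by a section. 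What your construction actually yields is $\Gamma'_*\circ{\gamma_S}_* = 2^{g-1}\,\Gamma_*$, which is strictly weaker than the lemma as stated. Your fallback is nonetheless adequate for the intended application: rational connectedness of $\ol{\cP}$ makes $2^{g-1}\Gamma_*[z]$ independent of $z$, hence zero, and torsion-freeness of $\CH_0(S)$ (Roitman) removes the factor $2^{g-1}$, so Corollary~\ref{zero} survives. But to prove the lemma itself you should replace the Abel--Prym construction by the Poincar\'e-divisor one.
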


\begin{proof}[Proof]
From the definition of ${\gamma}$, it suffices to show that there exists $\Gamma' \in \CH^2(\ol{\cP} \times S)$ such that the morphism $p : \cP_{(S,H)}(\wt{\cC} / \cC) \to \CH_0(S)$  introduced in~\ref{factor} factors through $\Gamma'_* : \CH_0(\ol{\cP}) \to \CH_0(S)$. Let $\cD$ be  the restriction to $\cP_{(S,H)}(\wt{\cC} / \cC) \times_U \wt{\cC}$ of the universal Poincaré divisor (unique up to linear equivalence) in ${\cJ}ac\(\wt{\cC} / U\)$.  The inclusions of each fiber of $\wt{\cC} \to U$ in $S$ define a map $\phi : \cP_{(S,H)}(\wt{\cC} / \cC) \times_U \wt{\cC} \to \cP_{(S,H)}(\wt{\cC} / \cC) \times S$. Let $\Gamma'$ be the closure of $\phi(\cD)$ in $\ol{\cP} \times S$, then the induced correspondence $\Gamma'_* : \CH_0(\ol{\cP}) \to \CH_0(S)$ gives a factorization of $p : \cP_{(S,H)}(\wt{\cC} / \cC) \to \CH_0(S)$ by construction.

\end{proof}



From the existence of the rational map $\gamma : S^{[g]} \dto  \ol{\cP}$ and Theorem~\ref{RCbase}, one now deduces

\begin{cor}\label{maincor}
$\CH_0\(\ol{\cP}\)$ is isomorphic to $\bZ$.
\end{cor} 

\begin{proof}[Proof]
One has the dominant rational  map $\gamma : S^{[g]} \dto \ol{\cP}$ with $\dim S^{[g]} = 2g > 2g-1 = \dim  \ol{\cP}$. Hence $ \ol{\cP}$ is rationally connected by Theorem~\ref{RCbase}, so $\CH_0(\ol{\cP}) \simeq \bZ$.

\end{proof}

\begin{cor}\label{zero}
The morphism $\Gamma_* : \CH_0(S^{[g]}) \to \CH_0(S)$ is identically zero.
\end{cor}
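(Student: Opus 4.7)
The plan is to combine the factorization in Lemma~\ref{fact2} with the rational connectedness of $\ol{\cP}$ given by Corollary~\ref{maincor}. Writing $\Gamma_* = \Gamma'_* \circ {\gamma_S}_*$ and using that $\CH_0(S^{[g]})$ is generated by classes of closed points, it is enough to show that ${\gamma_S}_*[\xi]$ is independent of $\xi \in S^{[g]}$, and then that the resulting common value vanishes under $\Gamma'_*$. The latter I will verify by computing $\Gamma_*$ directly on a single convenient zero-cycle chosen to lie on rational curves, so that Beauville--Voisin applies.

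By Corollary~\ref{maincor}, the variety $\ol{\cP}$ is smooth projective and rationally connected, so $\CH_0(\ol{\cP}) = \bZ$, generated by the class of any point. Hence for any two closed points $\xi_1,\xi_2 \in S^{[g]}$ at which $\gamma_S$ is defined, one has ${\gamma_S}_*[\xi_1] = {\gamma_S}_*[\xi_2]$ in $\CH_0(\ol{\cP})$. Applying $\Gamma'_*$, Lemma~\ref{fact2} gives $\Gamma_*[\xi_1] = \Gamma_*[\xi_2]$. Points in the indeterminacy locus of $\gamma_S$ cause no trouble, thanks to the birational invariance of Chow groups of $0$-cycles on smooth projective varieties: we may resolve $\gamma_S$ and argue as above after a birational modification. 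Consequently $\Gamma_*[\xi]$ is independent of $\xi$.

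To show that the common value is zero, I would take $\xi_0 \in S^{[g]}$ supported on $g$ distinct points $p_1,\ldots,p_g$ on a fixed rational curve $C \subset S$ (whose existence is ensured by Bogomolov--Mumford~\cite{MM}). The Beauville--Voisin theorem~\cite{BV} states that every point lying on a rational curve in $S$ has the same class $0_S \in \CH_0(S)$. Since $\imath(C)$ is again a rational curve, both $\{p_i\}$ and $\{\imath(p_i)\}$ consist of points on rational curves of $S$, giving
\begin{equation*}
\Gamma_*[\xi_0] = \sum_{i=1}^g [p_i] - \sum_{i=1}^g [\imath(p_i)] = g \cdot 0_S - g \cdot 0_S = 0 \in \CH_0(S).
\end{equation*}
Combining the two preceding paragraphs, $\Gamma_*[\xi] = 0$ for every closed point $\xi \in S^{[g]}$, and hence $\Gamma_* \equiv 0$ on all of $\CH_0(S^{[g]})$ by linearity.

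The only delicate point I anticipate is the reliance on Corollary~\ref{maincor}, which requires $(S,H,\imath)$ to be general in $\cM_\mathrm{syminv,H}$; this is already the setting implicit in Corollary~\ref{zero}, and upgrading to an arbitrary $(S,\imath)$ (as needed for Theorem~\ref{syminv}) must be handled separately by a specialization argument in families of symplectic involutions.
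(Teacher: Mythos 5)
Your proof is correct and follows essentially the same route as the paper: factor $\Gamma_*$ through $\CH_0(\ol{\cP})$ via Lemma~\ref{fact2}, use Corollary~\ref{maincor} to get $\CH_0(\ol{\cP}) = \bZ$ and hence constancy of $\Gamma_*[\xi]$, then evaluate at one well-chosen point. The only divergence is in that last evaluation: the paper takes $\xi$ to be an $\imath$-invariant zero-cycle, for which $\Gamma_*[\xi] = [Z] - \imath_*[Z] = 0$ tautologically, whereas you take a cycle supported on a rational curve and invoke Beauville--Voisin; both are valid, the paper's choice merely avoids that extra input.
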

\begin{proof}[Proof]


Using Lemma~\ref{fact2}, $\Gamma_*$ factors through $\CH_0\(\ol{\cP}\)$. As $\CH_0\(\ol{\cP}\) \simeq \bZ$ by Corollary~\ref{maincor}, $\Gamma_*[z]$ is independent of $z \in S^{[g]}$ where $[z]$ again denotes the class of $z$ in $\CH_0\(S^{[g]}\)$. By choosing for $z$ an $\imath$-invariant zero-cycle, we conclude that $\Gamma_{*}[z] = 0$ for any $z$.

\end{proof}

\begin{proof}[Proof of Theorem~\ref{syminv}]
Corollary~\ref{zero} implies Theorem~\ref{syminv}  by the following factorization
\begin{equation*}
\begin{tikzpicture}
\centering
\matrix (m) [matrix of math nodes, row sep=1.5em,
column sep=1.5em, text height=1.5ex, text depth=0.25ex]
{  S^{[g]} &   \CH_0(S) \\
 \CH_0(S^{[g]}) & \CH_0 (\ol{\cP})     \\};
\path[ ->,font=\scriptsize]
(m-1-1) edge node[auto] {$\Gamma_*$} (m-1-2);
\path[ ->,font=\scriptsize]
(m-1-1) edge node[auto] {} (m-2-1)
(m-2-1) edge node[auto] {${\gamma_S}_*$} (m-2-2)
(m-2-2) edge node[auto] {} (m-1-2);
\end{tikzpicture}
\end{equation*}
using the fact that for a point $z$ of $S^{[g]}$ corresponding to a subscheme $Z$ of $S$, the classes $[z] \in \CH_0(S^{[g]})$ and $[Z] \in \CH_0(S)$ satisfy $\Gamma_*[z] = [Z] - \imath_*[Z]$.


\end{proof}

\begin{rem}
It is tempting to ask whether one could apply this method to symplectic automorphisms $\sigma$ of arbitrary finite order $d>2$ instead of symplectic involutions. Unfortunately, this method fails to generalize for dimension reasons. Indeed, choosing a very ample line bundle $H$ on the quotient $K3$ surface $S / (f)$ such that $c_1(H)^2 =2g-2$, then exactly as above, for a general point $s=(s_1,\ldots,s_g) \in S^g$ there exists a unique smooth curve $C_s \in |H|$ of genus $g$ such that its inverse image $\wt{C_s}$ in $S$ contains  $s_1,\ldots,s_g$. Taking 
$$\Gamma_*(s) = \sum_i\([s_i] - f_*[s_i]\),$$
one could again construct the factorization
\begin{equation*}
\begin{tikzpicture}
\centering
\matrix (m) [matrix of math nodes, row sep=1.5em,
column sep=1.5em, text height=1.5ex, text depth=0.25ex]
{  S^{[g]} &  & \CH_0(S) \\
 & \cP_{(S,H)}(\wt{\cC} / \cC) &    \\};
\path[densely dashed, ->,font=\scriptsize]
(m-1-1) edge node[below left] {$\gamma_S$} (m-2-2);
\path[ ->,font=\scriptsize]
(m-1-1) edge node[auto] {$\Gamma_*$} (m-1-3)
(m-2-2) edge node[auto] {$p$} (m-1-3);
\end{tikzpicture}
\end{equation*}
with this time, the fiber of the universal Prym variety $\cP_{(S,H)}(\wt{\cC} / \cC) \to U$ over a smooth curve $C \in U$ is the Prym variety of the étale cyclic covering $\pi : \wt{C} \to C$ induced by the quotient map $S \to S / (f)$ and is defined by
$$ \Prym_{\wt{C}/C} = \Ima\(\Id - \sigma_*\) = \Ker\(\Id + \sigma_* + \cdots + \sigma_*^{d-1} \)^\circ,$$ 
where $\sigma_* : J(\wt{C}) \to J(\wt{C})$ is the induced map on the Jacobian. Hence 
$$\dim \cP_{(S,H)}(\wt{\cC} / \cC) = (d-1)(g-1) +g \ge 2g = \dim S^{[g]},$$
so we cannot apply Theorem~\ref{RCbase}. 

\end{rem}

\bibliographystyle{alpha}
\bibliography{syminv}

\end{document}